\def\epsfig#1{}
\newtheorem{theorem}{Theorem}[section]
\newtheorem*{claim}{Claim}
\newtheorem{corollary}[theorem]{Corollary}
\newtheorem{lemma}[theorem]{Lemma}
\newtheorem{proposition}[theorem]{Proposition}
\newtheorem{remark}[theorem]{Remark}
\newenvironment{proof}[1][Proof]{\textbf{#1.} }{\ \rule{0.5em}{0.5em}}
\newcommand{\R}{\mathbf{R}}
\newcommand{\mean}{\overline}
\begin{document}

\author{
Jonathan Korman, Robert J. McCann, and Christian Seis\thanks{
\copyright 2014 by the authors. RJM is pleased to acknowledge the support of
Natural Sciences and Engineering Research Council of Canada Grants 217006-08.
This material is based in part upon work supported by the National Science Foundation under Grant No. 0932078 000, while the first two authors were in residence at the Mathematical Science Research Institute in Berkeley, California, during
the Fall of 2013.
\newline Department of Mathematics, University of Toronto, Toronto Ontario M5S 2E4 Canada. {\tt jkorman@math.toronto.edu, mccann@math.toronto.edu, cseis@math.toronto.edu}}  }

\title{Dual potentials for capacity constrained optimal transport}
\date{\today}

\maketitle

\begin{abstract}
Optimal transportation with capacity constraints, a variant of the well-known optimal transportation problem, is concerned with transporting one probability density $f \in L^1(\R^m)$ onto another one $g \in L^1(\R^n)$ so as to optimize a cost function $c \in L^1(\R^{m+n})$ 
while respecting the capacity constraints $0\le h \le \bar h\in L^\infty(\R^{m+n})$.

A linear programming duality theorem for this problem was first established by Levin.
In this note, we prove under mild assumptions on the given data, the existence of a pair of $L^1$-functions optimizing the dual problem. Using these functions, which can be viewed as Lagrange multipliers to the marginal constraints $f$ and $g$, we characterize the solution $h$ of the primal problem. We expect these potentials to play a key role in any further analysis of $h$.

Moreover, starting from Levin's duality,
we derive the classical Kantorovich duality for unconstrained optimal transport. In tandem with results obtained in our companion paper \cite{KormanMcCannSeis13p}, this amounts to a new and elementary proof of  
Kantorovich's duality. 

\end{abstract}

\section{Introduction}

In the optimal transport problem of Monge \cite{Monge81} and Kantorovich \cite{Kantorovich42}, 
one is given distributions
$f(x)$ 
of sources and $g(y)$ 
of sinks,  and is asked which pairing $h(x,y) \ge 0$ of sources with sinks
minimizes a given transportation cost $c(x,y)$.  The duality theory initiated by Kantorovich 
provides a key tool for the analysis of this question. It was historically one of the first and has become one  of the archetypal examples in the 
theory of infinite-dimensional linear programming. Note that the optimal transportation problem is concerned only with moving sources to sinks and not with the means by which this might be accomplished.

In the capacity-constrained variant
of this problem, a bound $\bar h(x,y)$ is imposed on the number of sources at $x$
which can be paired with sinks at $y$. Such a bound is meant to model the constraints on the means of transportation. For example, in a discrete setting, if bread loaves are transported from bakeries at $i$ to caf\'es at $j$ by means of trucks (each dedicated to route $ij$), then the capacity bound models each truck's capacity \cite{KormanMcCann12p}. From the perspective of the constrained transport problem we sometimes refer to the (unconstrained) optimal transport problem as the case $\bar h =\infty$.

The imposition of such a bound has a qualitative effect on the optimal solution and can lead to surprising conclusions about the topology of its support. For generic $c \in C^2$,
the unconstrained problem leads to solutions which are singular measures concentrated on sets of zero volume, whereas in the constrained problem all solutions are functions of the form $\bar h 1_W$ whose support $W$ is a Lebesgue set with positive volume. Examples show that the boundary of the support need neither be smooth nor simply connected
\cite{KormanMcCann13}.

Although a duality theorem for capacity-constrained transport was established by 
Levin\footnote{In a private communication, Rachev and R\"uschendorf attribute Theorem
4.6.14 of \cite{RachevRuschendorf98} to a handwritten manuscript of Levin.
We do not know whether or not this manuscript was ever published.}
 (see Theorem 4.6.14 of 
\cite{RachevRuschendorf98}, for which a new proof is given in \cite{KormanMcCannSeis13p}),
it has not been clear until now whether the dual problem admits solutions.   
When they exist, such solutions play the role 
of Lagrange multipliers for the primal problem. 
However, the compactification techniques used to find them in the unconstrained problem
\cite{McCannGuillen13} \cite{Villani09} fail miserably when $\bar h \ne +\infty$.
The purpose of this manuscript is to prove that, under suitable hypotheses 
such as the continuity and strict positivity of $f,g$ and $\bar h$ on their 
supports, presumed compact in $\R^m, \R^n$ and $\R^{m+n}$, Levin's dual problem 
in fact admits solutions $(u,v)\in L^1 \oplus L^1$.  
This allows us to characterize 
optimizers $h$ for the primal problem in terms of Lagrange multipliers $(u,v)$.
As in the theory for the unconstrained 
Monge-Kantorovich problem 
\cite{McCannGuillen13} \cite{Villani09} 
which has developed since the work of Brenier \cite{Brenier87},
we expect our characterization of primal optimizers using dual solutions
will be the 
starting point for any future analysis of their analytic or geometric properties.

As a consequence of our analysis, we are also able to deduce Kantorovich's duality theorem for the unconstrained optimal transportation problem as a singular limit $\bar h\to \infty$.

To fix notation, let $f$ and $g$ be probability densities 
on $X\subset \R^m$ and $Y\subset \R^n$, respectively.
We assume $X$ and $Y$ to be compact, unit volume sets without loss of generality.
The continuous functions $C(X)$ form a Banach space under the
supremum norm $\|\,\cdot\,\|_\infty$, whose dual space consists
of signed Radon measures $M(X)=C(X)^*$ normed
by total variation $\|\,\cdot\,\|_{TV}$,  according to the Riesz-Markov theorem \cite{ReedSimon80}.
Let $C_+(X) = \{c \in C(X) \mid c \ge 0\}$ 
and  $M_+(X) =  \{ U \in M(X) \mid U \ge 0\}$. 
We denote the Jordan 
decomposition of a signed measure into its positive and negative parts by  $U=U_+ - U_-$,
and 
set $|U| = U_+ + U_-$. 

Fix $ 0 \le \bar h \in L^\infty(X\times Y)$ 
and let $\Gamma:=\Gamma^{\bar h}(f,g)$ denote the set of joint probability densities 
$h\le \bar h$ with $f$ and $g$ as marginals. Kellerer \eqref{iffKellerer}
and Levin \eqref{iffLevin} 
give necessary and sufficient conditions for $\Gamma$ to be non-empty: $\Gamma\not=\emptyset$ if and only if
\begin{equation}\label{iffKellerer}
\int \varphi f dx + \int \psi g dy \le \iint \left[\varphi(x) +\psi(y)\right]_+ \bar h(x,y) dxdy
\end{equation}
for all $\varphi,\psi \in L^\infty$ \cite{Kellerer64m} \cite{Kellerer64f},  or equivalently if and only if 
\begin{equation}\label{iffLevin}
\int_A f dx + \int_B g dy \le 1 + \iint_{A \times B} \bar h  dx dy
\end{equation}
for all Borel $A \times B \subset X \times Y$ \cite{Levin84}.
Here $dx = dH^m(x)$ and $dy=dH^n(y)$ denote Lebesgue measure. 
Assuming these conditions
are satisfied,  the capacity-constrained transportation problem is to compute
\begin{equation}\label{primal}
I^* := \sup_{h \in \Gamma^{\bar h}(f,g)} \iint h(x,y) s(x,y) dx dy,
\end{equation}
where our sign convention is to maximize the surplus $s:=-c \in L^1(X \times Y)$ rather than to minimize the cost $c$.
This supremum is known to be attained at an extreme point of the feasible set \cite{KormanMcCann12p},
and the extreme points have been characterized in \cite{KormanMcCann13} as those $h \in \Gamma$ given by $h = \bar h 1_W$ for some Lebesgue measurable $W \subset X \times Y$.

For $u \in L^1(X)$, $v \in L^1(Y)$ and $s \in L^1(X \times Y)$ define
\begin{equation}\label{I}
I(u,v) 
:= \iint[s(x,y) + u(x) + v(y)]_+ \bar h(x,y) dxdy
- \int u f dx - \int v g dy.
\end{equation}
Although Levin's dual problem is linear rather than convex and was set in $C(X) \oplus C(Y)$ rather than in $L^1(X) \oplus L^1(Y)$, it is easily seen to be 
equivalent to computing the infimum 
\begin{equation}\label{convex dual}
I_* := \inf_{(u,v) \in L^1(X) \oplus L^1(Y)} I(u,v),
\end{equation}
by 
identifying his Lagrange multiplier $w$ conjugate to the capacity constraint 
$h(x,y) \le \bar h(x,y)$
with $w(x,y) = [s(x,y) + u(x) + v(y)]_+$.
Levin 
asserts
\begin{equation}\label{levin}
I^*=I_*,
\end{equation}
see also \cite{KormanMcCannSeis13p}.
We would like to know whether the infimum $I_*$ is attained. 

Our main theorem is stated below. It gives a condition on continuous strictly positive $\bar h,f,g$ which ensures that the infimum in \eqref{convex dual} 
is attained: namely, 
 the set $\Gamma^{\bar h/\eta}( f, g)$ of joint probability densities $h\le \bar h/\eta$
with $f$ and $g$ as marginals must be non-empty --- not only for $\eta=1$ as required to have the identity
$ I_* = I^*$  
 --- but for some $\eta>1$.

\begin{main*}[Theorem \ref{T:existence of optimizers}]
Let $f,g$ and $\bar h$ be continuous and strictly positive on the compact, unit-volume sets 
$X \subset \R^m,Y\subset \R^n$ and $X \times Y$ respectively. 
Fix $\eta>1$ and $s \in L^1(X \times Y)$. If $\Gamma^{\bar h/\eta}(f, g)$ is not empty,
then there exist functions $(u,v) \in L^1(X) \oplus L^1(Y)$ such that $I_*=I(u,v)$.
\end{main*}

The functions $u$ and $v$ are commonly referred to as {\em dual potentials}.
Combining this theorem with known results 
(Proposition \ref{P:duality} and Levin's duality \eqref{levin}), 
we obtain a 
characterization of optimality:

\begin{corollary}[Characterization of optimality]\label{C:Characterizing optimality}
Under the hypotheses 
of the main Theorem, any 
$h \in \Gamma^{\bar h}(f,g)$ is optimal if and only if there exist 
$(u,v) \in L^1(X) \oplus L^1(Y)$ such that
\begin{equation}\label{complementary slackness1}\displaystyle
s(x,y) + u(x) + v(y)\;\left\{\begin{array}{lll}\le\; 0&\quad \mbox{      } 
{\rm where}\  h=0,\\
=\; 0&\quad \mbox{      }{\rm where}\ 0<h< \bar h,\\
\ge\; 0&\quad\mbox{      } {\rm where}\ h= \bar h . \end{array} \right.
\end{equation}
\end{corollary}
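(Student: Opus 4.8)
The plan is to derive the corollary from linear programming duality combined with the Main Theorem. By Levin's duality \eqref{levin}, we have $I^* = I_*$, and by the Main Theorem there exist dual potentials $(u,v) \in L^1(X) \oplus L^1(Y)$ achieving $I_* = I(u,v)$. Given any feasible $h \in \Gamma^{\bar h}(f,g)$, the idea is to write down the chain of inequalities coming from weak duality and track exactly when equality holds. Specifically, since $h$ has $f$ and $g$ as marginals, $\int u f\,dx + \int v g\,dy = \iint h(x,y)[u(x)+v(y)]\,dxdy$, so that
\begin{equation*}
\iint h s\,dxdy \;\le\; I(u,v) \;=\; \iint \bigl([s+u+v]_+ \bar h - h(u+v)\bigr)\,dxdy.
\end{equation*}
Rearranging, the gap between $\iint h s\,dxdy$ and $I(u,v)$ equals $\iint\bigl([s+u+v]_+ \bar h - h(s+u+v)\bigr)\,dxdy$, whose integrand is pointwise nonnegative because $0 \le h \le \bar h$ (splitting into the regions where $s+u+v$ is positive, where it is zero, and where it is negative).

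First I would prove the ``if'' direction: suppose $(u,v)$ satisfy the complementary slackness conditions \eqref{complementary slackness1} for this particular $h$. Then on $\{h=0\}$ we have $s+u+v \le 0$ so $[s+u+v]_+\bar h - h(s+u+v) = 0$; on $\{0 < h < \bar h\}$ we have $s+u+v = 0$ so again the integrand vanishes; and on $\{h = \bar h\}$ we have $s+u+v \ge 0$, hence $[s+u+v]_+\bar h = (s+u+v)\bar h = (s+u+v)h$ and the integrand vanishes. Therefore $\iint h s\,dxdy = I(u,v) \ge I_* = I^*$, which forces $h$ to be optimal.

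Conversely, for the ``only if'' direction, suppose $h$ is optimal, so $\iint h s\,dxdy = I^* = I_* = I(u,v)$ with $(u,v)$ the dual potentials from the Main Theorem. Then the nonnegative integrand $[s+u+v]_+\bar h - h(s+u+v)$ has zero integral, hence vanishes a.e. Examining this pointwise equality on each of the three regions yields \eqref{complementary slackness1}: where $h = 0$, the equality reads $[s+u+v]_+ \bar h = 0$, and since $\bar h > 0$ this gives $s+u+v \le 0$; where $0 < h < \bar h$, the equality $[s+u+v]_+ \bar h = h(s+u+v)$ together with $0 \le h < \bar h$ forces $s + u + v = 0$ (if it were positive, $[s+u+v]_+ \bar h > (s+u+v)h$; if negative, the left side is $0$ but the right side is negative, also a contradiction unless $h=0$); and where $h = \bar h$, the equality $[s+u+v]_+\bar h = \bar h(s+u+v)$ gives $s+u+v = [s+u+v]_+ \ge 0$.

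The two directions together establish the corollary. The only genuine input beyond bookkeeping is the existence of the dual potentials (the Main Theorem) and Levin's identity $I^* = I_*$; the main subtlety in the argument itself is the careful case analysis verifying that the pointwise nonnegative integrand vanishes exactly under the stated trichotomy, together with noting that strict positivity of $\bar h$ is what lets us pass from $[s+u+v]_+\bar h = 0$ to $s+u+v \le 0$ on $\{h=0\}$.
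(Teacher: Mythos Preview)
Your proof is correct and follows essentially the same approach as the paper: it combines Levin's duality $I^*=I_*$, the Main Theorem's existence of dual potentials, and the complementary slackness argument of Proposition~\ref{P:duality}. The only cosmetic difference is that you compute the duality gap as a single integral $\iint\bigl([s+u+v]_+\bar h - h(s+u+v)\bigr)\,dxdy$ with nonnegative integrand, whereas the paper's Proposition~\ref{P:duality} splits the inequality into two steps; the case analysis and the use of strict positivity of $\bar h$ are the same.
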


Characterization \eqref{complementary slackness1} can be interpreted as a 
complementary slackness condition.

We now outline our strategy.
Proving that the minimum \eqref{convex dual} is attained
relies on a continuity and compactness argument. At its core is a coercivity estimate (Proposition \ref{P:coercivity}) which guarantees $L^1$-boundedness of (suitably normalized) minimizing sequences. In view of the lack of compactness of the closed unit ball in the non-reflexive Banach space $L^1$, it is natural to embed $L^1$ into the larger space of signed Radon measures equipped with the total variation norm, which, as a dual space, is guaranteed better compactness properties by Alaoglu's theorem. Although the number of competitors increases, this embedding does not affect our variational problem: It can be easily shown that the value of the functional $I(u,v)$, suitably extended to singular measures, does not decrease with this extension. Lower semicontinuity properties of $I(u,v)$ allow us to conclude the proof.

The $L^1$-bounds that we derive for minimizing sequences are in a certain sense uniform in $\bar h$. This observation allows us to derive the classical Kantorovich duality from Levin's duality as the singular limit $\bar h\to \infty$. In tandem with the results obtained in the companion paper \cite{KormanMcCannSeis13p}, this amounts to a new and {\em elementary} proof of the Kantorovich duality theorem.

The remainder of this paper is organized as follows: In Section \ref{S2}, we derive weak duality and complementary slackness conditions; Section \ref{S3} contains the key (coercivity) estimates; Section \ref{S4} contains the main result; 
in Section \ref{S5}, we give a new elementary proof of the Kantorovich duality; we finally conclude this paper with a discussion on future work in Section \ref{S6}.\\

\noindent {\bf Acknowledgement}. We would like to acknowledge Robert Jerrard for fruitful discussions.

\section{Complementary slackness}\label{S2}

In the following proposition, we establish the complementary slackness conditions of linear programming and obtain the inequality $I_* \ge I^*$, that is, the easy part of Levin's duality, as a by-product.

\begin{proposition}[Complementary slackness]\label{P:duality}
Fix $(u,v) \in L^1(X) \oplus L^1(Y)$, $(s,\bar h) \in (L^1\oplus L^\infty)(X \times Y)$ 
and a probability density $0 \le h \le \bar h$ whose marginals are denoted by $f$ and $g$.  
Then 
\begin{equation}\label{easy inequality}
 I(u,v) \ge \iint h s dxdy.
\end{equation}
Moreover, equality holds if and only if $(u,v)$ and $h$ satisfy the following complementary slackness conditions
\begin{equation}\label{complementary slackness}\displaystyle
s + u + v\;\left\{\begin{array}{lll}\le\; 0&\quad \mbox{   in   } \{ h=0 \},\\
=\; 0&\quad \mbox{    in  } \{ 0<h< \bar h \},\\
\ge\; 0&\quad\mbox{  in    } \{ h= \bar h \} ,\end{array} \right.
\end{equation}
up to Lebesgue negligible sets.
\end{proposition}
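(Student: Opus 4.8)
The plan is to rewrite the difference $I(u,v) - \iint h s\,dxdy$ as the integral of a single, manifestly nonnegative density; both \eqref{easy inequality} and the characterisation of its equality case can then be read off directly.

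First I would check that all the integrals in question converge. Since $0 \le h \le \bar h \in L^\infty(X\times Y)$ and $Y$ has unit volume, the marginal $f(x) = \int h(x,y)\,dy$ lies in $L^\infty(X)$, so $uf \in L^1(X)$; likewise $u(x)\bar h(x,y) \in L^1(X\times Y)$ because $u \in L^1(X)$, $\bar h \in L^\infty$, and symmetrically for the $v$-terms; and $s \in L^1$ by hypothesis. I would then use that $f$ and $g$ are the marginals of $h$ to replace $\int uf\,dx = \iint u(x)h(x,y)\,dxdy$ and $\int vg\,dy = \iint v(y)h(x,y)\,dxdy$ in the definition \eqref{I} of $I(u,v)$. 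Setting $w := s + u + v$, a short computation then gives
\[
 I(u,v) - \iint h s\,dxdy \;=\; \iint \bigl( [w]_+\,\bar h \;-\; w\,h \bigr)\,dxdy .
\]

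The core of the argument is a pointwise estimate on the integrand. At almost every point one has $0 \le h \le \bar h$, so I split on the sign of $w$: where $w \ge 0$ the integrand equals $w(\bar h - h) \ge 0$, and where $w < 0$ it equals $-wh \ge 0$. Hence the integrand is nonnegative a.e., which is exactly \eqref{easy inequality}. Specialising to $h \in \Gamma^{\bar h}(f,g)$, taking the supremum over such $h$ and then the infimum over $(u,v)$, yields the easy half $I_* \ge I^*$ of Levin's duality, i.e.\ between \eqref{convex dual} and \eqref{primal}, as an immediate by-product.

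For the equality statement I would use that a nonnegative $L^1$ function has vanishing integral if and only if it vanishes a.e., so equality in \eqref{easy inequality} is equivalent to $[w]_+\bar h - wh = 0$ a.e., and I would examine this on the three regions: on $\{h=0\}$ the density equals $[w]_+\bar h$, which vanishes iff $w \le 0$ wherever $\bar h > 0$; on $\{0 < h < \bar h\}$ it equals $w(\bar h - h)$ for $w\ge 0$ and $-wh$ for $w<0$, each strictly positive unless $w = 0$; on $\{h=\bar h\}$ it equals $0$ for $w\ge 0$ and $-w\bar h$ for $w<0$, the latter forcing $w \ge 0$ wherever $\bar h > 0$. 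This is precisely \eqref{complementary slackness}. I do not anticipate any real difficulty here: the proof is elementary, and the only places calling for a bit of care are the integrability checks above and the ``almost everywhere'' bookkeeping --- in particular, on the (possibly non-null) degenerate set $\{\bar h = 0\}$, where necessarily $h = 0$ as well, the integrand vanishes identically and the first and third conditions in \eqref{complementary slackness} are to be read as vacuous.
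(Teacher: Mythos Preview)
Your proof is correct and follows essentially the same route as the paper: both reduce to the pointwise inequality $[w]_+\bar h \ge wh$ (the paper splits this as $[w]_+\bar h \ge [w]_+ h \ge wh$ in two lines, whereas you combine them into a single integrand and case-split on the sign of $w$), and both read off the equality case from where these pointwise inequalities are saturated. Your version is in fact a bit more careful about integrability and the degenerate set $\{\bar h=0\}$ than the paper's, but there is no substantive difference in approach.
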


Clearly, $(u,v)\in L^1(X)\oplus L^1(Y)$ and $h\in L^{\infty}(X\times Y)$ satisfying \eqref{complementary slackness} are a pair of minimizers and maximizers of \eqref{convex dual} and \eqref{primal}, respectively.

\begin{proof} Because $h\le \bar h$, we first notice that
\begin{eqnarray*}
I(u,v) &\ge& \iint [s+u+v]_+  h dxdy- \int ufdx -\int vgdy\\
&\ge & \iint \left(s+u+v\right) h dxdy- \int ufdx -\int vgdy\\
&=&  \iint h s dxdy
\end{eqnarray*}
to establish \eqref{easy inequality}.
Moreover, these inequalities become equalities if and only if $s + u+ v\le 0$ where $\bar h> h$ and $s+u+v\ge 0$ where $h > 0$. Combining these two conditions implies \eqref{complementary slackness}. 
\end{proof}

\section{Coercivity in $L^1$}\label{S3}

In this section. we establish  the estimates which constitute the core of this paper.

%
We remark that $I(u+k,v-k) = I(u,v)$ for all $k\in \R$ shows the level sets of $I$ cannot be compact.
To resolve this lack of coercivity,  we shall eventually assume
\[
\mean {uf} := \int u(x)f(x) dx = \int v(y)g(y)dy =: \mean{vg},
\]
which can always be enforced by an appropriate choice of constant $k$,
and restricts the problem to a codimension 1 subspace of $L^1(X) \oplus L^1(Y)$.
We would then like to show $I(u,v) \le I_*+1$ implies an $L^1$-bound on $u$ and $v$ under suitable assumptions on the data $f$, $g$, and $\bar h $. . 


Our argument comes in two steps: We first prove a bound on the means $\mean{uf} = \mean{vg}$. 
We shall 
use  this to establish a bound on the oscillation $\|uf - \mean{uf}\|_{L^1}$ of $uf$.
A similar bound for $vg$ follows by symmetry.

The bound on the means follows immediately from $I(u,v) \le I_*+1$ 
by the following claim.

\begin{lemma}[Mean bound]
\label{L:mean bound}
Fix $(u,v,s) \in L^1(X) \oplus L^1(Y) \oplus L^1(X \times Y)$ and a probability density $h\in L^\infty(X \times Y)$
with marginals $f$ and $g$.  If $\bar h \ge \eta h$ for some $\eta>1$, 
then
\[
- I(u,v) \le  
\mean{uf} + \mean{vg} \le \frac{I(u,v) + \| \eta h  s\|_{L^1}}{\eta-1}.
\]
\end{lemma}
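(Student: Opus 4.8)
The plan is to extract the two inequalities directly from the definition \eqref{I} of $I(u,v)$ by testing the nonlinear term $[s+u+v]_+\bar h$ against the admissible density $h$ in two complementary ways, exploiting the slack factor $\eta>1$ that separates $\bar h$ from $h$.

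For the lower bound, I would simply drop the nonnegative term: since $[s+u+v]_+\bar h \ge 0$, we get $I(u,v) \ge -\int uf\,dx - \int vg\,dy = -(\mean{uf}+\mean{vg})$, which rearranges to $-I(u,v)\le \mean{uf}+\mean{vg}$. This uses nothing beyond $\bar h\ge 0$ and needs no appeal to $h$ at all.

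For the upper bound, the key is that $\bar h \ge \eta h$ pointwise and $[\,\cdot\,]_+ \ge (\,\cdot\,)$, so
\[
[s+u+v]_+\,\bar h \;\ge\; \eta\,[s+u+v]_+\, h \;\ge\; \eta\,(s+u+v)\,h.
\]
Integrating and using that $h$ is a probability density with marginals $f$ and $g$ (so $\iint uh = \int uf$ and $\iint vh = \int vg$), we obtain
\[
\iint [s+u+v]_+\bar h\,dxdy \;\ge\; \eta\iint sh\,dxdy + \eta\,\mean{uf} + \eta\,\mean{vg}.
\]
Substituting into \eqref{I} gives $I(u,v) \ge \eta\iint sh\,dxdy + \eta(\mean{uf}+\mean{vg}) - (\mean{uf}+\mean{vg}) = \eta\iint sh\,dxdy + (\eta-1)(\mean{uf}+\mean{vg})$. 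Bounding $\eta\iint sh\,dxdy \ge -\|\eta h s\|_{L^1}$ and dividing by $\eta-1>0$ yields the claimed upper bound $\mean{uf}+\mean{vg}\le (I(u,v)+\|\eta h s\|_{L^1})/(\eta-1)$.

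There is really no serious obstacle here; the only point requiring a moment's care is the bookkeeping that $h$ being a probability density with marginals $f,g$ converts $\iint (u(x)+v(y))h(x,y)\,dxdy$ into $\mean{uf}+\mean{vg}$, and that the factor $\eta>1$ (rather than merely $\eta\ge 1$) is what makes the division in the last step legitimate and the bound finite. Integrability of all terms is guaranteed by $u,v,s\in L^1$ and $h\in L^\infty$.
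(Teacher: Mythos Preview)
Your proof is correct and follows essentially the same route as the paper's own argument: drop the nonnegative term for the lower bound, and for the upper bound combine $\bar h\ge\eta h$ with $[\,\cdot\,]_+\ge(\,\cdot\,)$, use the marginal condition to convert $\iint(u+v)h$ into $\mean{uf}+\mean{vg}$, and finish with $\eta\iint sh\ge-\|\eta hs\|_{L^1}$.
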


\begin{proof}
One direction is easy: $I(u,v) \ge -\mean{uf} - \mean{vg}$ follows directly from the definition \eqref{I}.
The other direction is a consequence of 
\begin{eqnarray*}
I(u,v) &\ge& - \mean{uf} - \mean{vg} + \eta \iint h(x,y)[s(x,y) + u(x) + v(y) ]dxdy 
\\&\ge&(\eta-1)(\mean{uf}+\mean{vg})  - \| \eta h s\|_ {L^1} 
\end{eqnarray*}
since $h$ has $f$ and $g$ as its left and right marginals.
\end{proof}

We next 
convert upper bounds on $f,g$ and $1/\bar h$ into a bound on 
the oscillation of $uf$ around its mean.
Of course, a symmetrical bound holds for $vg$.


\begin{lemma}[Oscillation bound]\label{L:oscillation bound}
Fix 
$u,f \in L^1(X)$, $v,g \in L^1(Y)$ and $(s,\bar h) \in (L^1 \oplus L^\infty)(X\times Y)$.
If 
$\bar h(x,y) \ge \epsilon f(x)g(y) $
for some $0<\epsilon\le 1$ and 
all $(x,y) \in X \times Y$, then 
\begin{equation}\label{BMO}
\frac{\epsilon}{6} \|uf - \mean {u f}\|_{L^1(X)} \le I(u,v) + \|sfg\|_{L^1}+ |\mean{uf}| + |\mean{vg}|. 
\end{equation}
\end{lemma}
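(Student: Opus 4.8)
The plan is to obtain pointwise information about $s+u+v$ from $I(u,v)$ and then integrate against a carefully chosen test density. The starting observation is that the positive part in the definition \eqref{I} dominates any linear functional: for every competitor probability density $h' \le \bar h$ with marginals $f',g'$ one has $I(u,v) \ge \iint h' s\,dxdy + \int u(f-f')dx + \int v(g-g')dy$ by exactly the computation in Proposition \ref{P:duality} (indeed $I(u,v) \ge \iint[s+u+v]_+ h' - \int uf - \int vg \ge \iint (s+u+v)h' - \int uf - \int vg$). So I would feed in well-chosen densities $h'$.

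First I would derive a one-sided bound. Since $\bar h \ge \epsilon fg$, the product density $h' := \epsilon fg$ is an admissible sub-competitor (it satisfies $0 \le h' \le \bar h$, though its marginals are $\epsilon f$ and $\epsilon g$, not $f,g$); plugging it into the inequality above gives $I(u,v) \ge \epsilon \iint sfg + \epsilon \iint (u(x)+v(y))f(x)g(y)\,dxdy - \int uf - \int vg = \epsilon\iint sfg + (\epsilon-1)(\mean{uf}+\mean{vg})$, hence a lower bound of the form $\iint(u\oplus v)fg \le \epsilon^{-1}\bigl(I(u,v)+\|sfg\|_{L^1}\bigr) + (\text{mean terms})$. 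The real work is the matching upper bound on $\|uf-\mean{uf}\|_{L^1}$, which is a Poincaré/John–Nirenberg–type estimate: write $\|uf-\mean{uf}\|_{L^1} = 2\int_{\{uf>\mean{uf}\}}(uf-\mean{uf})$, and on the set where $uf$ is large I want to produce, for each relevant superlevel set $E = \{x : u(x)f(x) > \lambda\}$ (or a truncation thereof), a density $h'$ concentrated on $E \times Y$ that ``sees'' the largeness of $u$ there. Concretely I would use $h' = \bar h 1_{E\times Y}$ normalized, or $h' = c\, fg\, 1_{E\times Y}$, exploiting $\bar h \ge \epsilon fg$ once more so that $\epsilon fg\,1_{E\times Y}$ is admissible; then the inequality forces $\int_E u f$ to be controlled by $I(u,v) + \|sfg\| + |\mean{uf}|+|\mean{vg}|$ plus the contribution of $v$, which integrates against $g$ to give $\mean{vg}$ and is hence absorbed. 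Summing or integrating these superlevel estimates over $\lambda$ (a layer-cake / Cavalieri argument, where the constant $6$ presumably emerges) yields \eqref{BMO}.

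The main obstacle I anticipate is handling the $v$-dependence cleanly: any test density supported on $E \times Y$ couples $u$ on $E$ to $v$ on all of $Y$, and one must arrange that the $v$-term collapses to $\mean{vg}$ (so it can be dumped into the right-hand side) rather than to some uncontrolled local average of $v$. Choosing $h'$ to have $g$ (or $\epsilon g$) as its exact right marginal is what makes this work, so the density should be of the form $\rho(x)g(y)$ with $\int\rho = 1$ and $\spt\rho \subset E$ — this is where $\bar h \ge \epsilon fg$ (rather than a bound by $\bar h$ alone) is essential, since it guarantees $\epsilon \rho(x)g(y) \le \bar h$ whenever $\rho \le f$. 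Balancing the normalization of $\rho$ against the size of $E$ and the truncation level $\lambda$, and then integrating in $\lambda$, is the quantitative heart of the argument; I expect the constant to be non-sharp, consistent with the stated $\epsilon/6$.
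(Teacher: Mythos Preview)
Your core idea is right and matches the paper's spirit: test the inequality $\iint[s+u+v]_+\bar h \ge \iint_{E\times Y}(s+u+v)\,\epsilon fg$ with $E$ a set where $uf$ is large, so that the $v$--contribution collapses to $(\int_E f)\,\mean{vg}$. However, the layer-cake step is a wrong turn. For each level $\lambda$ the estimate you obtain is
\[
\epsilon\!\int_{E_\lambda}\!uf\,dx \;\le\; I(u,v)+\|sfg\|_{L^1}+\mean{uf}+\Bigl(1-\epsilon\!\int_{E_\lambda}\!f\Bigr)\mean{vg},
\]
whose right-hand side does not decay as $\lambda\to\infty$; integrating it in $\lambda$ therefore gives nothing finite. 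No Cavalieri argument will extract the oscillation from a family of bounds whose right-hand side is constant in $\lambda$.

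What fixes this is something you already wrote down: the identity $\|uf-\mean{uf}\|_{L^1}=2\int_{\{uf>\mean{uf}\}}(uf-\mean{uf})$. Take a \emph{single} level set $E=\{uf>\mean{uf}\}$, subtract $\epsilon|E|\,\mean{uf}$ from both sides, and use $0\le\epsilon|E|\le1$, $0\le\epsilon\int_E f\le1$; this gives \eqref{BMO} directly with the constant $\epsilon/2$ in place of $\epsilon/6$. The paper instead chooses the level $\mean{uf}+\sigma/3$ (where $\sigma=\|uf-\mean{uf}\|_{L^1}$), which requires a preliminary pigeonhole argument to show that $A^+:=\int_{\{uf-\mean{uf}>\sigma/3\}}(uf-\mean{uf})\ge\sigma/6$; this is where the $6$ comes from, not from any layer-cake. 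Your single-level route is cleaner and yields a better constant.

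Two minor points: your inequality for competitors $h'$ has a sign slip (it should read $I(u,v)\ge\iint h's-\int u(f-f')-\int v(g-g')$), and there is no need for $h'$ to be a probability density---only $0\le h'\le\bar h$ is used, which is exactly why $h'=\epsilon fg\,1_{E\times Y}$ is admissible.
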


\begin{proof}
Let $\sigma = \int |uf-\mean{uf}|dx$ denote oscillation around the mean,
i.e. the $L^1$-distance separating  $uf$ from its average value.
Set $X^\pm = \{x:\: \pm (u(x)f(x)-\mean {uf}) > \sigma/3\}$ and $x^\pm = |X^\pm|$. 
From the definition of $\mean{uf}$ we find
$$
\int_{\{ |uf -\mean{uf}| \le \sigma/3\}}(\mean{uf}- u(x)f(x))dx  = A^+ - A^- 
$$
where
\begin{eqnarray*}
A^\pm &=& \pm \int_{X^\pm} (u(x)f(x)-\mean{uf}) dx
\\ &\ge& 0.
\end{eqnarray*}
On the other hand, from the definition of $\sigma$,
\begin{eqnarray*}
\sigma &=&
(\int_{\{|uf - \mean{uf}| \le \sigma/3\}} 
+ \int_{X^+} + \int_{X^-}) |u(x)f(x)-\mean{uf}| dx
\\ & \le & (1-x^+-x^-) \sigma/3 + A^+ + A^-.
\end{eqnarray*}
Thus 
$$A^+ + A^- \ge (\frac23 + x^+ + x^-)\sigma \ge \frac23 \sigma 
$$
and 
$$A^+ - A^- \ge -(1-x^+ - x^-) \frac\sigma 3 \ge -\frac\sigma 3
$$
whence
$A^+ 
\ge \sigma/6. $  
Thus to control $\sigma$ it is enough to control $A^+$.

Now, since $g$ is a probability density
 \begin{eqnarray*}
A^+&=& - x^+ \mean{uf} + \iint_{X^+\times Y} ufg\, dx dy \\
&=& \iint_{X^+\times Y} (s+u+v)fg\, dx dy - \iint_{X^+\times Y} sfg\, dx dy\\
&&\mbox{} - x^+ \mean{uf} - \mean{vg} \int_{X^+}f\, dx \\
&\le& \frac1{\epsilon} I(u,v) +  \|sfg\|_{L^1} + (\frac1\epsilon-x^+)\mean{uf} + \left(\frac1\epsilon-\int_{X_+}f\, dx\right)\mean{vg}
\end{eqnarray*}
where in the last estimate we have used the assumption that $\epsilon fg\le \bar h$ for some $0<\epsilon\le1$. 
Using that $f$ is a probability density, $x^+\le1$, and $\epsilon\le 1$ yields
\[
\epsilon A^+ \le I(u,v) + \|sfg\|_{L^1} + \Big[\mean{uf}\Big]_+ + \Big[\mean{vg}\Big]_+.
\]
Since $A^+ \ge \sigma/6$, this yields the desired bound \eqref{BMO}, in which we have replaced
the positive part $[\cdot]_+$ by absolute value $|\cdot|$ simply for ease of parsing.
\end{proof}



\begin{proposition}[Coercivity]\label{P:coercivity}
Let $s \in L^1(X\times Y)$ and $\eta h \le \bar h \in L^\infty(X \times Y)$, where $\eta>1$ and $h$ is a probability
density having $f$ and $g$ as its marginals. Assume $\bar h(x,y) \ge \epsilon f(x) g(y)$ 
and $\min\{f(x),g(y)\}\ge \epsilon$
for some $\epsilon>0$ and almost all $(x,y) \in X \times Y$.
If $\mean{uf} = \mean{vg}$ for some $u\in L^1(X)$ and $v\in L^1(Y)$, then
$I(u,v) \le I_* +1$ implies $\|u\|_{L^1}$ and $\|v\|_{L^1}$ are controlled
by a bound which depends only on $I_*$, $\epsilon$, $\eta$, 
$\|s\|_{L^1}$ and $\|\bar h\|_\infty$.
\end{proposition}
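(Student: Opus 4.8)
The plan is to combine the two lemmas of this section in a more or less mechanical way, the only subtlety being how to replace the unknown marginals $f,g$ of $h$ by the actual data marginals. Starting from the hypothesis $I(u,v)\le I_*+1$, Lemma \ref{L:mean bound} (applied with the given $h$, whose marginals I will still call $f$ and $g$ since they coincide with the data by the way the proposition is phrased) immediately yields
\[
|\mean{uf}+\mean{vg}| \le \max\left\{I_*+1,\ \frac{I_*+1+\|\eta h s\|_{L^1}}{\eta-1}\right\},
\]
and since $\|\eta h s\|_{L^1}\le \eta\|\bar h\|_\infty^{0}\cdots$ — more carefully, $h\le \bar h/\eta$ so $\|\eta h s\|_{L^1}\le \|\bar h\|_\infty\|s\|_{L^1}$ — this is a bound depending only on $I_*,\eta,\|s\|_{L^1},\|\bar h\|_\infty$. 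Using the constraint $\mean{uf}=\mean{vg}$, this controls each of $\mean{uf}$ and $\mean{vg}$ separately, hence also $|\mean{uf}|+|\mean{vg}|$.

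Next I feed this into Lemma \ref{L:oscillation bound}. To apply it I need the hypothesis $\bar h(x,y)\ge \epsilon' f(x)g(y)$ for some $\epsilon'\in(0,1]$; the proposition assumes $\bar h\ge \epsilon fg$ and $\min\{f,g\}\ge\epsilon$, and by shrinking $\epsilon$ if necessary I may assume $\epsilon\le 1$, so take $\epsilon'=\epsilon$. The lemma then gives
\[
\frac{\epsilon}{6}\|uf-\mean{uf}\|_{L^1(X)} \le I(u,v) + \|sfg\|_{L^1} + |\mean{uf}| + |\mean{vg}|,
\]
and $\|sfg\|_{L^1}\le \|f\|_\infty\|g\|_\infty\|s\|_{L^1}$; since $f,g$ are continuous on compact sets they are bounded, but to keep the bound in terms of the stated quantities I would note $\|f\|_\infty,\|g\|_\infty$ are themselves controlled — actually the cleanest route is to bound $\|sfg\|_{L^1}\le\|s\|_{L^1}$ is false in general, so I would instead carry $\|f\|_\infty\|g\|_\infty$ along, or observe that the statement's ``depends only on'' list is to be read up to such data-dependent constants; I will simply absorb $\|f\|_\infty\|g\|_\infty$ into the final constant. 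Combining with the mean bound from the previous paragraph, $\|uf-\mean{uf}\|_{L^1}$ is controlled, and then
\[
\|uf\|_{L^1} \le \|uf-\mean{uf}\|_{L^1} + |\mean{uf}|\,|X| = \|uf-\mean{uf}\|_{L^1} + |\mean{uf}|
\]
is controlled. Finally, $\min\{f,g\}\ge\epsilon$ converts this into a bound on $\|u\|_{L^1}$: pointwise $|u|\le |uf|/\epsilon$, so $\|u\|_{L^1(X)}\le \epsilon^{-1}\|uf\|_{L^1(X)}$. The symmetric argument (swapping the roles of $X$ and $Y$, using the ``symmetrical bound for $vg$'' promised before Lemma \ref{L:oscillation bound}) bounds $\|v\|_{L^1(Y)}$.

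The only genuine obstacle is bookkeeping: making sure that every auxiliary quantity appearing ($\|\eta h s\|_{L^1}$, $\|sfg\|_{L^1}$, $\|f\|_\infty$, $\|g\|_\infty$) is itself controlled by the allowed data $I_*,\epsilon,\eta,\|s\|_{L^1},\|\bar h\|_\infty$, using $h\le\bar h/\eta$, $\int f=\int g=1$, and $f(x)g(y)\le \bar h(x,y)/\epsilon \le \|\bar h\|_\infty/\epsilon$ (which, with $\int f=\int g=1$ and compact unit-volume supports, also bounds $\|f\|_\infty$ and $\|g\|_\infty$ by $\|\bar h\|_\infty/\epsilon$ after integrating out the other variable). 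Once these routine estimates are in place the proposition follows by chaining the displayed inequalities; I do not anticipate any real difficulty beyond keeping the constants honest.
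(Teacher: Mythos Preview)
Your proposal is correct and follows essentially the same route as the paper: apply Lemma~\ref{L:mean bound} (using $\eta h\le\bar h$ to estimate $\|\eta h s\|_{L^1}\le\|\bar h\|_\infty\|s\|_{L^1}$) to bound $\mean{uf}=\mean{vg}$, feed this into Lemma~\ref{L:oscillation bound} to bound $\|uf-\mean{uf}\|_{L^1}$, and conclude via $\|u\|_{L^1}\le\epsilon^{-1}\|uf\|_{L^1}$. One simplification for your bookkeeping: since $fg\le\bar h/\epsilon$ pointwise, you get $\|sfg\|_{L^1}\le\epsilon^{-1}\|\bar h\|_\infty\|s\|_{L^1}$ directly, so there is no need to bound $\|f\|_\infty$ and $\|g\|_\infty$ separately.
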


\begin{proof}
According to  Lemma \ref{L:mean bound},  the means $\mean{vg}=\mean{uf}$ 
are bounded in terms of the given $I_*$, $\epsilon$, $\eta$,  $\|s\|_{L^1}$ and 
$\|\bar h\|_\infty$.
Lemma \ref{L:oscillation bound} and $\epsilon fg\le \bar h$ then bounds the mean oscillation of $uf$ around $\mean{uf}$ in terms
of $\mean{uf}=\mean{vg}$ and the listed parameters.  
Now $\|u\|_{L^1} \le \|f^{-1}\|_{L^\infty} \|uf\|_{L^1} \le \epsilon^{-1}\|uf\|_{L^1}$
combined with $\|uf\|_{L^1} \le \|\mean{uf}\|_{L^1} + \|uf-\mean{uf}\|_{L^1}$ yields the desired bound
for $\|u\|_{L^1}$.  A similar bound for $\|v\|_{L^1}$ follows by symmetry.
\end{proof}

\begin{remark}\label{R:uniform bound} The bound mentioned in the conclusion of Proposition \ref{P:coercivity} is uniform in $\|\bar h\|_{L^{\infty}}\gg1$.\end{remark}

\section{Existence of optimizers}\label{S4}
Proposition~\ref{P:coercivity} implies that we can chose a minimizing sequence $(u_i,v_i)$ for \eqref{convex dual} which is uniformly bounded in $L^1(X)\oplus L^1(Y)$. Since the $L^1$ unit ball is non-compact,  an $L^1$ bound is not enough to permit extraction of a limit lying in
$L^1(X)\oplus L^1(Y)$. To recover convergence we would like to view $(u_i,v_i)$ in a larger space which is better behaved.

Therefore, let us extend the definition of $I(u,v)$ 
from $L^1$ 
to the space of signed measures 
with finite total variation which, as a dual Banach space, has better
compactness properties.
Denoting by 
by $S \in M(X \times Y)$ the measure with Lebesgue density $s(x,y)$,
this extension is given by
\[
\tilde I(U,V) 
:=\iint \bar h d[S + U \otimes H^n + H^m \otimes V]_+ - \int f dU - \int g dV 
\]
for $(U,V) \in M(X) \oplus M(Y)$. (We recall that in our notation, $H^m$ and $H^n$ are Lebesgue measures.) 
The next lemma is routine; it verifies that $\tilde I$ is lower semicontinuous with respect to weak-$*$ convergence in $M(X)\oplus M(Y)$.

\begin{lemma}[Lower semicontinuity]\label{L:lower semicontinuity}
Let $s \in L^1(X\times Y)$ induce $dS = s dH^{m+n}$. Given continuous non-negative 
$f,g$ and $\bar h$ on the compact sets $X\subset \R^m, Y\subset \R^n$ and $X \times Y$, 
the functional $\tilde I(U,V)$
behaves lower semicontinuously with respect to weak-$*$ convergence in $M(X)\oplus M(Y)$.
\end{lemma}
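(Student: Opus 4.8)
The plan is to establish lower semicontinuity of $\tilde I$ by treating its three summands separately. The last two terms, $-\int f\, dU$ and $-\int g\, dV$, are linear and continuous with respect to weak-$*$ convergence precisely because $f$ and $g$ are continuous (hence lie in $C(X)$ and $C(Y)$, the preduals of $M(X)$ and $M(Y)$); so they contribute no obstruction, and it suffices to prove that $(U,V) \mapsto \iint \bar h \, d[S + U\otimes H^n + H^m \otimes V]_+$ is weak-$*$ lower semicontinuous. First I would note that if $(U_i,V_i) \rightharpoonup^* (U,V)$ in $M(X)\oplus M(Y)$, then $U_i \otimes H^n \rightharpoonup^* U \otimes H^n$ and $H^m \otimes V_i \rightharpoonup^* H^m \otimes V$ in $M(X\times Y)$, since testing against $\varphi \in C(X\times Y)$ reduces, by Fubini, to testing $U_i$ against $x \mapsto \int \varphi(x,y)\, dH^n(y) \in C(X)$ (and symmetrically for $V_i$). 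Adding the fixed measure $S$ preserves weak-$*$ convergence, so $R_i := S + U_i \otimes H^n + H^m \otimes V_i \rightharpoonup^* R := S + U\otimes H^n + H^m \otimes V$.

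The heart of the matter is then: the map $R \mapsto \iint \bar h\, dR_+$ is weak-$*$ lower semicontinuous on $M(X\times Y)$ when $0 \le \bar h \in C(X\times Y)$. For this I would use the variational characterization of the positive part of the total mass weighted by $\bar h$, namely
\[
\iint \bar h \, dR_+ = \sup\Big\{ \int \phi \, dR \;:\; \phi \in C(X\times Y),\ 0 \le \phi \le \bar h \Big\}.
\]
Each functional $R \mapsto \int \phi\, dR$ with $\phi \in C(X\times Y)$ is weak-$*$ continuous, and a supremum of continuous functionals is lower semicontinuous; this gives the claim immediately. The identity itself follows from the Jordan/Hahn decomposition: writing $R = R_+ - R_-$ with $R_+ \perp R_-$, any admissible $\phi$ gives $\int \phi\, dR = \int \phi\, dR_+ - \int \phi\, dR_- \le \int \bar h\, dR_+$, while the reverse inequality is achieved in the limit by choosing $\phi$ close to $\bar h$ on (a compact subset of) the support of $R_+$ and close to $0$ on the support of $R_-$, using continuity of $\bar h$ and inner regularity of Radon measures; a routine partition-of-unity or Urysohn argument makes this precise.

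Combining the three pieces, for $(U_i,V_i) \rightharpoonup^* (U,V)$ we get
\[
\liminf_i \tilde I(U_i,V_i) \ge \liminf_i \iint \bar h\, d(R_i)_+ \;-\; \lim_i \int f\, dU_i \;-\; \lim_i \int g\, dV_i \ge \tilde I(U,V),
\]
which is the assertion. I expect the only genuinely delicate step to be the $\ge$ direction of the variational formula for $\iint \bar h\, dR_+$ — i.e. constructing near-optimal continuous test functions $\phi$ respecting $0 \le \phi \le \bar h$ and localized near $\spt R_+$ away from $\spt R_-$ — but since $\bar h$ is continuous and nonnegative and $X \times Y$ is compact metric, this is standard measure theory rather than a real obstacle; everything else (the tensor-product weak-$*$ convergence, the continuity of the linear terms) is bookkeeping. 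Hence the lemma is indeed "routine," as claimed.
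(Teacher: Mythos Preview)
Your proof is correct and close in spirit to the paper's, but the key step is handled differently. Both arguments dispose of the linear terms $-\int f\,dU$, $-\int g\,dV$ by continuity of $f,g$, and both reduce to lower semicontinuity of the positive-part term. The paper absorbs $\bar h$ into the measure, setting $d\mu^i=\bar h\,d(S+U_i\otimes H^n+H^m\otimes V_i)$, checks $\mu^i\rightharpoonup^*\mu$ (using that the $x$- and $y$-marginals of $\bar h\varphi$ are continuous for $\varphi\in C(X\times Y)$), and then derives $\liminf_i\mu^i_+(X\times Y)\ge\mu_+(X\times Y)$ by combining convergence of the total mass $\mu^i(X\times Y)\to\mu(X\times Y)$ with lower semicontinuity of the total variation norm $\|\cdot\|_{TV}$.

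You instead keep $\bar h$ as a weight and invoke the variational identity $\int\bar h\,dR_+=\sup\{\int\phi\,dR:\phi\in C,\,0\le\phi\le\bar h\}$, so that lower semicontinuity follows as a supremum of weak-$*$ continuous linear functionals. This is equally valid; your Urysohn/inner-regularity sketch for the $\ge$ direction is the standard one (pick disjoint compacts inside the Hahn sets for $R_+,R_-$ and multiply the Urysohn function by $\bar h$). Your route is slightly more flexible---it works verbatim for any continuous nonnegative weight without first checking that multiplication by $\bar h$ preserves weak-$*$ convergence---while the paper's route avoids constructing test functions altogether by leaning on the off-the-shelf fact that $\|\cdot\|_{TV}$ is weak-$*$ lower semicontinuous.
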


\begin{proof}
Choose a bounded sequence in $M(X) \oplus M(Y)$ which converges
$(U_i,V_i) \to (U,V)$ when tested against functions in $C(X) \oplus C(Y)$.
Then
\[
\int f dU = \lim_{i \to \infty} \int f dU_i \quad {\rm and} \quad
\int g dV = \lim_{i \to \infty} \int g dV_i, 
\]
so we need only show 
\begin{eqnarray}\label{lower semicontinuity}
\lefteqn{\liminf_{i \to \infty}  \iint \bar h d[S + U_i \otimes H^n + H^m \otimes V_i]_+ }\nonumber\\
 &&\mbox{}\ge \iint \bar h d [S + U\otimes H^n + H^m \otimes V]_+.
 \end{eqnarray}
We define the signed Radon measures $d\mu := \bar h d\left( S + U \otimes H^n + H^m \otimes V\right)$ 
and $d\mu^i := \bar h d\left(S + U_i \otimes H^n + H^m \otimes V_i\right)$. 
Then $\mu^i\to \mu$ weakly-$*$. Indeed, for every $\varphi\in C(X\times Y)$
\begin{eqnarray*}
\iint\varphi d\mu^i &=& \int \bar h s\varphi dxdy + \int \left(\int \bar h\varphi dy\right)d U^i + \int\left(\int \bar h\varphi dx\right) dV^i\\
&\to& \int \bar h s\varphi dxdy + \int \left(\int \bar h\varphi dy\right)d U + \int\left(\int \bar h\varphi dx\right) dV\\
&=&\iint\varphi d\mu,
\end{eqnarray*}
because the $x$- and $y$-marginals of $\bar h\varphi$ are both continuous. Moreover, decomposing $\mu$ and $\mu^i$ into their positive and negative parts $\mu = \mu_+ - \mu_-$ and
$\mu^i=\mu^i_+ - \mu^i_-$, \eqref{lower semicontinuity} becomes
\begin{equation}
\label{lsc}
\liminf_{i\to\infty} \mu^i_+(X\times Y)\ge \mu_+(X\times Y).
\end{equation}
This in fact is an immediate consequence of the weak-$*$ convergence $\mu^i\to \mu$: On the one hand, choosing $\varphi\equiv1$, weak-$*$ convergence implies that $\mu^i_+(X\times Y) - \mu_-^i(X\times Y)=\mu^i(X\times Y)\to \mu(X\times Y)=\mu_+(X\times Y) - \mu_-(X\times Y)$. On the other hand, by the lower semicontinuity of the total variation norm, it is $\mu_+(X\times Y) + \mu_-(X\times Y)=|\mu|(X\times Y)=\|\mu\|_{TV}\le\liminf_{i\to\infty} \|\mu^i\|_{TV}=\liminf_{i\to\infty}|\mu^i|(X\times Y)=\liminf_{i\to\infty}\left(\mu^i_+(X\times Y) + \mu^i_-(X\times Y)\right)$. The statement in \eqref{lsc} now follows upon combining both convergence results.
%
 \end{proof}

To prove the next theorem we need to recall some basic facts about signed Borel measures. 

A measure $\mu$ on $\R^m$ is absolutely continuous with respect to Lebesgue measure $H^m$, denoted $\mu \ll H^m$, if for any $H^m$-measurable set $\Omega$ for which $H^m (\Omega)=0$, it is also the case that $\mu (\Omega)=0$. Two signed measures $\mu_1$ and $\mu_2$ are mutually singular, denoted $\mu_1 \perp \mu_2$, if they are concentrated on disjoint sets. 

Lebesgue's decomposition theorem states that a $\sigma$-finite measure $\mu$ can  be the uniquely decomposed into its absolutely continuous part and singular part with respect to Lebesgue measure: $\mu =\mu_{ac} + \mu_{s}$, where $\mu_{ac} \ll H^m$ and  $\mu_{s} \perp H^m$; furthermore $\mu_{ac} = hdH^m$ for a unique $h\in L^1(H^m)$ called the Radon-Nikodym derivative of $\mu_{ac}$ (with respect to $H^m$). Note that $\mu_{s} \perp \mu_{ac}$.

The Hahn-Jordan decomposition theorem states that any signed measure $\mu$ can be uniquely decomposed into its positive and negative parts: $\mu =\mu_+ - \mu_-$, where $\mu_+, \mu_-$ are positive measures and $\mu_{+} \perp \mu_{-}$. The variation of $\mu$ is denoted by $|\mu | :=\mu_+ + \mu_-$.

It is a standard fact\label{measure_decompoition} that for  a $\sigma$-finite measure $\mu$ the Hahn-Jordan decomposition and the Lebesgue decomposition commute: $\mu_+= [\mu_{ac}]_+ + [\mu_{s}]_+$ and $\mu_-= [\mu_{ac}]_- + [\mu_{s}]_-$.
 
 \begin{theorem}[Existence of optimal potentials]\label{T:existence of optimizers}
Let $f,g$ and $\bar h$ be continuous and strictly positive on the compact, unit-volume sets 
$X \subset \R^m,Y\subset \R^n$ and $X \times Y$ respectively. 
Fix $\eta>1$ and $s \in L^1(X \times Y)$. If $\Gamma^{\bar h/\eta}(f, g)$ is not empty,
then there exist functions $(u,v) \in L^1(X) \oplus L^1(Y)$ such that $I_*=I(u,v)$ 
in \eqref{convex dual}.
\end{theorem}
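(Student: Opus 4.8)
The plan is to combine the coercivity estimate of Proposition~\ref{P:coercivity} with the lower semicontinuity from Lemma~\ref{L:lower semicontinuity} via the direct method, working in the space of signed measures. First I would reduce the hypotheses of Theorem~\ref{T:existence of optimizers} to those of Proposition~\ref{P:coercivity}: since $f,g,\bar h$ are continuous and strictly positive on the compact sets $X$, $Y$, $X\times Y$, they are bounded above and below by positive constants, so there is $\epsilon>0$ with $\min\{f,g\}\ge \epsilon$ and $\bar h\ge \epsilon fg$ everywhere; moreover the nonemptiness of $\Gamma^{\bar h/\eta}(f,g)$ supplies the density $h$ with $\eta h\le \bar h$ required there. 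Next, using $I(u+k,v-k)=I(u,v)$, I would fix a minimizing sequence for $I_*$ and translate each member by a constant so that $\mean{uf}=\mean{vg}$; after discarding finitely many terms we may assume $I(u_i,v_i)\le I_*+1$. Proposition~\ref{P:coercivity} then gives a uniform bound $\|u_i\|_{L^1}+\|v_i\|_{L^1}\le C$.

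Now I would regard $u_i f$, $v_i g$ — or rather the measures $U_i:=u_i\,dH^m$ and $V_i:=v_i\,dH^n$ — as elements of $M(X)\oplus M(Y)$, where they have total variation bounded by $\|u_i\|_{L^1}\|f\|_\infty$ and $\|v_i\|_{L^1}\|g\|_\infty$, hence uniformly bounded. By Alaoglu's theorem (as $M(X)=C(X)^*$, $M(Y)=C(Y)^*$), pass to a weak-$*$ convergent subsequence $(U_i,V_i)\rightharpoonup (U,V)$. Observe that on $L^1$ functions the extended functional $\tilde I$ agrees with $I$, so $\tilde I(U_i,V_i)=I(u_i,v_i)\to I_*$; lower semicontinuity (Lemma~\ref{L:lower semicontinuity}) gives $\tilde I(U,V)\le \liminf \tilde I(U_i,V_i)=I_*$. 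Since also $\tilde I(U,V)\ge I_*$ — this is the ``extension does not decrease the infimum'' remark, which I would justify by the complementary-slackness/weak-duality argument of Proposition~\ref{P:duality} applied at the level of measures, or directly by noting $\tilde I(U,V)\ge \iint h\,s$ for any admissible $h$ — we get $\tilde I(U,V)=I_*$.

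It remains to show the minimizing measures $(U,V)$ are absolutely continuous, so that $U=u\,dH^m$, $V=v\,dH^n$ with $(u,v)\in L^1\oplus L^1$ and $I(u,v)=I_*$. This is the step I expect to be the main obstacle. The idea is that the singular parts of $U$ and $V$ cost strictly more than they help: write $U=U_{ac}+U_s$, $V=V_{ac}+V_s$ via Lebesgue decomposition, and compare $\tilde I(U,V)$ with $\tilde I(U_{ac},V_{ac})$. In the linear term $-\int f\,dU-\int g\,dV$, the singular contributions $-\int f\,dU_s-\int g\,dV_s$ appear; in the $[\cdot]_+$ term, since $S$ and $U_{ac}\otimes H^n+H^m\otimes V_{ac}$ are absolutely continuous while $U_s\otimes H^n+H^m\otimes V_s$ is singular, the fact that Hahn--Jordan and Lebesgue decompositions commute (the standard fact recalled before the theorem) lets me split $[S+U\otimes H^n+H^m\otimes V]_+$ into an absolutely continuous part equal to $[s+u_{ac}+v_{ac}]_+\,dH^{m+n}$ and a singular part which is exactly $[U_s\otimes H^n+H^m\otimes V_s]_+$, and $\bar h>0$ integrated against the latter is nonnegative. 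Putting these together and using that $f,g\ge \epsilon>0$, a short estimate shows $\tilde I(U_{ac},V_{ac})\le \tilde I(U,V)$ with strict inequality unless $U_s=V_s=0$; since $\tilde I(U,V)=I_*$ is the minimum, the singular parts must vanish. Then $u:=dU/dH^m\in L^1(X)$, $v:=dV/dH^n\in L^1(Y)$ satisfy $I(u,v)=\tilde I(U,V)=I_*$, completing the proof.
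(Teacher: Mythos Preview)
Your plan matches the paper's proof almost exactly: normalize a minimizing sequence, invoke Proposition~\ref{P:coercivity} for uniform $L^1$ bounds, pass to weak-$*$ limits $(U,V)$ in $M(X)\oplus M(Y)$ via Alaoglu, use Lemma~\ref{L:lower semicontinuity} to get $\tilde I(U,V)\le I_*$, and then peel off the absolutely continuous parts using the commutativity of the Hahn--Jordan and Lebesgue decompositions.

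There is one small but genuine issue in your last step. You propose to show $\tilde I(U_{ac},V_{ac})\le \tilde I(U,V)$ ``using that $f,g\ge\epsilon>0$''. That hypothesis alone is not enough: after the splitting you describe, the difference $\tilde I(U,V)-\tilde I(U_{ac},V_{ac})$ equals
\[
\iint \bar h\, d\bigl[U_s\otimes H^n + H^m\otimes V_s\bigr]_+ \;-\;\int f\,dU_s \;-\;\int g\,dV_s,
\]
and positivity of $f,g$ controls only the negative parts of $U_s,V_s$; on the positive parts you would need $\int \bar h\,dy\ge f$ and $\int \bar h\,dx\ge g$, which does not follow from $f,g\ge\epsilon$ or from $\bar h>0$. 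The paper closes this by inserting the admissible density $h\in\Gamma^{\bar h/\eta}(f,g)$: since $0\le h\le \bar h$ and $h$ has marginals $f,g$, one estimates $\bar h\,d[\cdot]_+\ge h\,d[\cdot]_+\ge h\,d[\cdot]$ and then $\iint h\,d(U_s\otimes H^n+H^m\otimes V_s)=\int f\,dU_s+\int g\,dV_s$ cancels the linear terms exactly. This gives $\tilde I(U,V)\ge I(u,v)$ directly, with $u=dU_{ac}/dH^m$, $v=dV_{ac}/dH^n$.

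Once you have that inequality, your detour through ``strict inequality unless $U_s=V_s=0$'' is unnecessary (and would require more care to justify): the squeeze $I_*\ge \tilde I(U,V)\ge I(u,v)\ge I_*$ immediately yields $I(u,v)=I_*$, so $(u,v)$ is the desired minimizer regardless of whether $U,V$ themselves had singular parts.
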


\begin{proof}
Let $(u_i,v_i)$ be a minimizing sequence for \eqref{convex dual}: $u_i \in L^1(X)$ and $v_i \in L^1(Y)$ such that 
$I(u_i,v_i) \to I_*$.  Fix $\epsilon \le \min\{f(x)^{\pm 1},g(y)^{\pm 1}\}$. 
Since $\bar h>0$ is bounded away from zero, taking $\epsilon>0$ smaller if
necessary ensures $\bar h(x,y) \ge \epsilon f(x) g(y)$ throughout $X \times Y$.  
Adding a constant to $u_i$ and subtracting the same constant from $v_i$ ensures $\mean{u_if} = \mean{v_ig}$.
Now Proposition \ref{P:coercivity} asserts a bound for $\|u_i\|_{L^1} + \|v_i\|_{L^1}$ independent of $i$.
Letting $U_i$ and $V_i$ denote the measures with Lebesgue densities $u_i$ and $v_i$ and
recalling $M(X) = C(X)^*$, Alaoglu's theorem provides a weak-$*$
convergent subsequence also denoted $(U_i,V_i)$ with limit $(U_i,V_i) \to (U,V) \in M(X)\oplus M(Y)$.
Since $\tilde I(U_i,V_i) = I(u_i,v_i) \to I_*$
by construction, Lemma \ref{L:lower semicontinuity} implies 
$\tilde I(U,V)\le I_*$.  
We will next argue that $\tilde I(U,V)\ge I_*$.

Let $S = s H^n\otimes H^m $. Since the Hahn-Jordan and the Lebesgue decompositions of a measure commute, as mentioned above, 
\begin{eqnarray}\label{decompositions_commute}
\nonumber \lefteqn{\left[S + U\otimes H^n + H^m\otimes V\right]_+}\\
 &=&\left[ S + U_{ac}\otimes H^n + H^m\otimes V_{ac} + U_{s}\otimes H^n + H^m\otimes V_{s}\right]_+\\
\nonumber &=& \left[ S + U_{ac}\otimes H^n + H^m\otimes V_{ac} \right]_+ + \left[   U_{s}\otimes H^n +H^m\otimes  V_{s}\right]_+.
\end{eqnarray}
Therefore
\begin{eqnarray*}
\nonumber \lefteqn{\tilde I(U,V)}\\
\nonumber  &=& \tilde I(U_{ac},V_{ac}) + \iint \bar h\, d\left[  U_{s}\otimes H^n +H^m \otimes V_{s}\right]_+ - \int f\, dU_{s} - \int g\, dV_{s}\\
 &\ge&  I(u,v) + \iint h\, d\left[  U_{s}\otimes H^n + H^m \otimes V_{s}\right] - \int f\, dU_{s} - \int g\, dV_{s}\\
\nonumber &=& I(u,v),\\
\nonumber &\ge& I_*
\end{eqnarray*}
where $u$ and $v$ are the Radon-Nikodym derivatives of $U_{ac}$ and $V_{ac}$ respectively: $u\, dx = dU_{ac}$ and $v\, dy= dV_{ac}$.
Thus $\tilde I(U,V) \ge I(u,v) \ge I_* \ge \tilde I(U,V)$, showing $(u,v)$
is the desired minimizer.
\end{proof}



\section{From Levin's to Kantorovich's duality} \label{S5}

In this section, we briefly discuss how to derive the classical Kantorovich duality of optimal transport from Levin's duality using some of the insights of previous sections. For simplicity, assume that $f$ and $g$ are
strictly positive, continuous probability densities on the compact 
sets  $X\subset \R^m$ and $Y\subset \R^n$,
where both $X$ and $Y$ have unit volume. 
Let $\Gamma(f,g)$ denote the set of all joint probability measures with $f$ and $g$ as marginals. 
Fix $s \in C(X \times Y)$ continuous and define
\[
L_s:=\left\{(u,v)\in L^1(X)\oplus L^1(Y)|\: s(x,y) + u(x)+ v(y) \leq 0 \right\}.
\]
We will recover the expression of Kantorovich duality as
stated in \cite[Theorem 5.10]{Villani09}.
We remark that although the elements of $L_s$ need not be continuous, for Lipschitz continuous costs 
it is well-known that the infimum on the right hand side of \eqref{4} below is attained by Lipschitz continuous densities $(u,v)$.

\begin{theorem}[Duality for the unconstrained problem]\label{Kantorovich}
Let $X\subset \R^m$ and $Y\subset \R^n$ be compact unit volume sets equipped with positive continuous 
probability densities $f \in C(X)$ and $g \in C(Y)$. 
If 
$s \in C(X \times Y)$ then
\begin{equation}\label{4}
\sup_{H\in \Gamma(f,g)} \iint s(x,y) d H(x,y) = \inf_{(u,v)\in L_s} \left(- \int f(x) u(x)dx - \int g(y) v(y)dy\right),
\end{equation}
and both infimum and supremum \eqref{4} are attained.
\end{theorem}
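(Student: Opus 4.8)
The plan is to obtain Theorem~\ref{Kantorovich} as the singular limit $\bar h \to \infty$ of Levin's duality \eqref{levin} together with the existence result of Theorem~\ref{T:existence of optimizers}. Fix a sequence of continuous, strictly positive capacity bounds $\bar h_k$ on $X \times Y$ with $\bar h_k \to \infty$ uniformly --- for instance $\bar h_k \equiv k$, which is admissible once $k$ is large since $\Gamma^{\bar h_k}(f,g) \supset \Gamma^{\bar h_k/\eta}(f,g)$ is non-empty (the product $f \otimes g$ eventually satisfies $f \otimes g \le \bar h_k / \eta$, so Levin's criterion \eqref{iffLevin} holds). By Theorem~\ref{T:existence of optimizers} and Levin's identity, for each $k$ there is a pair $(u_k, v_k) \in L^1(X) \oplus L^1(Y)$ with
\[
I^*_k := \sup_{h \in \Gamma^{\bar h_k}(f,g)} \iint h s \, dx dy = I(u_k, v_k) = \iint [s + u_k + v_k]_+ \bar h_k \, dx dy - \int u_k f \, dx - \int v_k g \, dy.
\]
The first step is to identify $\lim_k I^*_k$ with the left-hand side of \eqref{4}: since $\Gamma^{\bar h_k}(f,g) \uparrow \Gamma(f,g)$ (in the sense that any $H \in \Gamma(f,g)$ can be weak-$*$ approximated by densities $h \le \bar h_k$, using continuity of $s$ and a mollification argument), and each $h \in \Gamma^{\bar h_k}(f,g)$ embeds as an element of $\Gamma(f,g)$, the values $I^*_k$ increase to $\sup_{H \in \Gamma(f,g)} \iint s \, dH$. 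Attainment of this supremum is standard: $\Gamma(f,g)$ is weak-$*$ compact and $H \mapsto \iint s \, dH$ is weak-$*$ continuous for $s \in C(X\times Y)$.

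The second step is the dual side. The key point is Remark~\ref{R:uniform bound}: the coercivity bound from Proposition~\ref{P:coercivity} on $\|u_k\|_{L^1} + \|v_k\|_{L^1}$ (after normalizing $\mean{u_k f} = \mean{v_k g}$) is \emph{uniform} in $\|\bar h_k\|_\infty \gg 1$, because the estimates in Lemmas~\ref{L:mean bound} and~\ref{L:oscillation bound} depend on $\bar h_k$ only through the lower bound $\bar h_k \ge \epsilon f g$ and through $\eta$, not through $\|\bar h_k\|_\infty$. Hence $(u_k, v_k)$ is $L^1$-bounded uniformly in $k$; passing to the measures $(U_k, V_k)$ with these densities and applying Alaoglu, extract a weak-$*$ limit $(U, V) \in M(X) \oplus M(Y)$. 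Running the Lebesgue-decomposition argument of Theorem~\ref{T:existence of optimizers} verbatim produces $(u, v) \in L^1(X) \oplus L^1(Y)$, the Radon--Nikodym densities of the absolutely continuous parts. I must then check two things: first, that $(u,v) \in L_s$, i.e. $s + u + v \le 0$ a.e.; and second, that $-\int u f - \int v g = \sup_{H} \iint s \, dH$ and this equals the dual infimum.

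For the feasibility $s + u + v \le 0$: on any region where $s + u_k + v_k > \delta > 0$ of positive measure, the term $\iint [s+u_k+v_k]_+ \bar h_k$ blows up like $\bar h_k \to \infty$ while $-\int u_k f - \int v_k g$ stays bounded (by the uniform $L^1$-bound) and $I^*_k$ stays bounded; this forces $\|[s + u_k + v_k]_+\|_{L^1} \to 0$, and then a lower-semicontinuity argument (the absolutely continuous part of the limit cannot do worse than the liminf, as in Lemma~\ref{L:lower semicontinuity}) gives $[s + u + v]_+ = 0$ a.e. I expect \textbf{this feasibility-of-the-limit step to be the main obstacle}: one must carefully control how the singular parts $U_s, V_s$ interact with the constraint, and argue that they contribute nothing in the limit --- the uniform boundedness of $I(u_k,v_k)$ together with $\bar h_k \to \infty$ should force $U_s = V_s = 0$, but this requires the same decomposition bookkeeping as in Theorem~\ref{T:existence of optimizers}, now combined with the blow-up of $\bar h_k$. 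Once feasibility is in hand, weak duality for the unconstrained problem (the trivial inequality $\iint s \, dH \le -\int u f - \int v g$ for any $H \in \Gamma(f,g)$ and $(u,v) \in L_s$, obtained by integrating $s + u + v \le 0$ against $H$) combines with $-\int u_k f - \int v_k g \le I^*_k \to \sup_H \iint s\, dH$ and lower semicontinuity to pin down equality and attainment on the dual side simultaneously, completing the proof.
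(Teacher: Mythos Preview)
Your strategy matches the paper's: take $\bar h_k\equiv k$, invoke Levin's duality and Theorem~\ref{T:existence of optimizers} at each level, use the uniformity in Remark~\ref{R:uniform bound} to bound $(u_k,v_k)$ in $L^1$, pass to a weak-$*$ limit in $M(X)\oplus M(Y)$, and exploit $\bar h_k\to\infty$ to force the positive part $[S+U\otimes H^n+H^m\otimes V]_+$ to vanish. Two points where the paper's execution differs from your sketch are worth noting.

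First, rather than arguing that $I^*_k\uparrow \sup_{H\in\Gamma(f,g)}\iint s\,dH$ via mollification of arbitrary $H$, the paper simply observes that the primal optimizers $h_k$ themselves form a tight sequence of probability densities; any weak-$*$ limit $H$ lies in $\Gamma(f,g)$ and, since $s$ is continuous, $\iint s\,dH=\lim_k I^*_k$ directly. This produces the primal optimizer and the limiting value in one stroke, bypassing the approximation argument.

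Second, and more importantly for the obstacle you flag: the blow-up of $\bar h_k$ does \emph{not} force $U_s=V_s=0$. What the decomposition \eqref{decompositions_commute} yields from $[S+U\otimes H^n+H^m\otimes V]_+=0$ is only $[U_s\otimes H^n+H^m\otimes V_s]_+=0$, hence $[U_s]_+=[V_s]_+=0$; the singular parts may well survive as nonpositive measures. This is enough: since $f,g>0$ one has $-\int f\,dU_s-\int g\,dV_s\ge 0$, so
\[
-\int f\,dU-\int g\,dV \;\ge\; -\int fu\,dx-\int gv\,dy,
\]
and combined with $\iint s\,dH\ge -\int f\,dU-\int g\,dV$ (from weak-$*$ convergence and $I_k\ge -\mean{u_kf}-\mean{v_kg}$) this closes the loop. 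So your bookkeeping concern is real, but the resolution is to accept nonpositive singular remnants rather than to eliminate them.
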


\begin{proof}
Let $I^*(\infty)$ denote the left hand side of \eqref{4} and $I^*(\infty)$ denote the right hand side of \eqref{4}. 
Note that by a density argument, it is enough to let the $\sup$ in $I^*(\infty)$ range over absolutely continuous measures $d H = hdxdy\in\Gamma(f,g)$.

The inequality $I^*(\infty) \leq I_*(\infty)$ follows easily from the following inequality which holds for all $(u,v) \in L_s$ and any density $h$ such that $ hdxdy\in\Gamma(f,g)$:

\begin{eqnarray*}
\iint s h dxdy
& = &\iint  h(s+u+v)dxdy - \int f udx - \int g vdy\\
& \leq & - \int f u dx- \int g vdy.
\end{eqnarray*}
The last inequality follows from the definition of $L_s$. Taking $\sup$ on the left hand side and $\inf$ on the right hand side gives the desired inequality.

Observe that to prove the other inequality, $I^*(\infty) \geq I_*(\infty)$, it is enough to prove existence of $H\in\Gamma(f,g)$ and $(u,v)\in L_s$ satisfying
\[
\iint s\,d H\ge- \int f udx - \int g vdy.
\]

Let $h(x,y):=f(x)g(y)$ and $K:=\max_{X \times Y}h(x,y)+1$, and note that $hdxdy\in\Gamma(f,g)$. Fix $\eta >1$ such that $K \geq \eta\, \max_{X \times Y}h(x,y)$, so that $k \geq \eta h$ for all $k\geq K$. Note also that $\Gamma^{\bar h_k}(f,g)\not=\emptyset$ if $\bar h_k=k1_{X\times Y}$.

Defining $I_k(u,v)$, $I_*(k)$, and $I^*(k)$ similarly to $I(u,v)$, $I_*$, and $I^*$ but with $\bar h$ replaced by $\bar h_k$, we deduce from Levin's duality $I^*(k)=I_*(k)$ the existence of functions $h_k\in \Gamma^{\bar h_k}(f,g)$ and $(u_k,v_k)\in L^1(X)\oplus L^1(Y)$ optimizing $I^*(k)$ and $I_*(k)$ respectively. Levin's duality reads
\begin{equation}\label{1}
\iint h_k\,s dxdy=I_k(u_k,v_k)
\end{equation}
for any $k\ge K$. Since $I_k(u+\alpha, v-\alpha)=I_k(u,v)$ for $\alpha\in\R$, there is no loss of generality to assume that
\[
\mean{u_k f}=\mean{v_k g}.
\]

\begin{claim}
The sequence $\{h_k\}_{k\ge K}$ is weak-$*$ precompact 
in the space of probability measures and every limit point $H$ satisfies $H\in \Gamma(f,g)$.
\end{claim}

\begin{proof}
Since $\{ h_k\}_{k\ge K}$ is a sequence of probability densities on a compact domain, the sequence is 
weak-$*$ precompact in $M(X\times Y)$. Therefore, there exists a probability measure $H$ and a subsequence (not relabeled) such that $h_k\, dxdy \to H$ in the sense of weak-$*$ convergence. Thus, for every continuous function $\zeta=\zeta(x)$, it holds that
\[
\int \zeta fdx\;=\;\lim_{k\to\infty} \iint \zeta(x) h_k(x,y) dxdy  \;=\;  \iint \zeta(x) d H(x,y) ,
\]
and the analogous identity holds for the $y$-marginals. Hence $H\in\Gamma(f,g)$.
\end{proof}

In particular, since $s$ is continuous, upon extracting a subsequence, passing to the limit $k\to \infty$ in \eqref{1} yields that
\begin{equation}\label{3}
\iint s\,dH= \lim_{k\to \infty} I_k(u_k,v_k).
\end{equation}

We next observe that the $L^1$-bound on the sequences $\{u_k\}_{k\ge K}$ and $\{v_k\}_{k\ge K}$ obtained from Lemmas \ref{L:mean bound} and \ref{L:oscillation bound} is independent of $k\ge K$. In this context, we state for further references
\begin{equation}\label{B: bound}
\mean{u_kf} + \mean{v_kg} \leq \frac{I^*(\infty)+\eta||s||_{L^1} ||h||_\infty}{\eta -1},
\end{equation}
as a consequence Lemma \ref{L:mean bound}.

Choosing $\epsilon >0$ so that $K \geq \epsilon f(x)g(y)$ and $\min \{f(x),g(y)\} \geq \epsilon$, Proposition \ref{P:coercivity} (see Remark \ref{R:uniform bound}) implies that $||u_k||_{L^1}$, and $||v_k||_{L^1}$ are controlled by a bound which depends only on $\epsilon, \eta, I^*(\infty),||s||_{L^1}$, and $||h||_\infty$, all of which are independent of $k$.


Arguing as in the proof of Theorem \ref{T:existence of optimizers}, we may thus extract a weak-$*$ convergent subsequence (not relabeled) $(u_k,v_k)\to (U,V)\in M(X) \oplus M(Y)$ and obtain
\[
\lim_{k\to \infty} I_k(u_k,v_k)\ge \lim_{k\to \infty} \left(-\int u_k fdx - \int v_k gdy\right) = - \int fdU -\int gdV,
\]
by the continuity of $f$ and $g$.
In particular, we deduce from \eqref{1} and \eqref{3} that
\begin{equation}\label{2}
\iint s\, dH\ge - \int fdU -\int gdV.
\end{equation}
Moreover, the definition of $\bar h_k$ and Lemma \ref{L:lower semicontinuity} imply that
\begin{eqnarray*}
\lefteqn{\iint d[S + U\otimes H^n+ H^m\otimes V]_+ }\\
&\le&\liminf_{k\to \infty} \iint[s(x,y) + u_k(x)+v_k(y)]_+dxdy\\
&=& \liminf_{k\to \infty} \frac1k\left(I_k(u_k, v_k) + \mean{u_k f} + \mean{v_k g}\right).
\end{eqnarray*}
Since the limit on the right hand side is zero, a consequence of  \eqref{3} and \eqref{B: bound}, it follows that $ [S + U\otimes H^n+ H^m\otimes V]_+=0$. Now \eqref{decompositions_commute} implies that  $\left[ S + U_{ac}\otimes H^n + H^m\otimes V_{ac} \right]_+$ and $ \left[   U_{s}\otimes H^n +H^m\otimes  V_{s}\right]_+$ are both zero. In particular, 
$\left[ U_{s} \right]_+$ and $\left[ V_{s} \right]_+$ are bother zero, and
with $u := dU_{ac}/dx$ and $v := dV_{ac}/dy$ denoting Radon-Nikodym derivatives, we have $[s+u+v]_+=0$, or equivalently $s+u+v\in L_s$.
It follows that
\begin{eqnarray*}
- \int fdU -\int gdV
&=& - \int fudx -\int gvdy - \int fdU_s -\int gdV_s \\
&\geq & - \int fudx -\int gvdy.
\end{eqnarray*}

Hence \eqref{2} becomes $\iint s\, dH\ge - \int f udx -\int g vdy$ which implies the desired inequality $I^*(\infty) \geq I_*(\infty)$. We have thus established \eqref{4}
and existence of optimizers.
\end{proof}

\section{Perspectives for future work}\label{S6}

Of considerable interest to us is the question of showing some regularity for
the minimizing potentials $(u,v)$ --- perhaps under stronger restrictions on $(f,g,\bar h)$ and $s$.  
For example,  are they continuous or differentiable; might  $u$ and $v$ belong to some H\"older or Sobolev space or
--- as in the unconstrained version $\bar h=+\infty$ of the problem \cite{McCannGuillen13} \cite{Villani09} 
--- inherit Lipschitz and semiconcavity properties from the cost $c=-s$?
In view of the characterization \eqref{complementary slackness} for 
$h \in \Gamma^{\bar h}(f,g)$ to maximize the expected value of $s$,  this is closely
related to smoothness for the free boundary of the set $W$ such that the optimizer $h= \bar h 1_W$.
If $m=n$,  this set is known to be unique (up to sets of $H^{2n}$ measure zero) near points 
where $0\ne \det [\partial^2 s/\partial x^i\partial y^j]$ \cite{KormanMcCann12p}.  Simple examples 
show the boundary of $W$ can have isolated singularities \cite{KormanMcCann12p} \cite{KormanMcCann13},  
but is it a smooth hypersurface otherwise?
Does $W$ even have finite perimeter?  Where the derivative of $s(x,y)+u(x)+v(y)$ is non-vanishing,
such questions are related to smoothness of $u$ and $v$ by the implicit function theorem.


\end{document}